\documentclass[a4paper,11pt]{amsart}

\usepackage{hyperref} 
\usepackage[lite]{amsrefs} 
\usepackage{microtype} 
\usepackage{verbatim} 
\usepackage{color}  
\usepackage{amsmath,amssymb} 
\usepackage{tikz-cd}
\usepackage[all]{xy} 
\usepackage{enumerate} 
\usepackage{mathtools} 
\usepackage{mathrsfs}
\usepackage{bm}
\title[\textit{S}-arithmetic spinor groups]{\textit{S}-arithmetic spinor groups with the same finite quotients and distinct \(\bm{\ell^2}\)-cohomology}
\author{Holger Kammeyer}
\author{Roman Sauer}
\address{Institute for Algebra and Geometry\\ Karlsruhe Institute of Technology\\ Germany}
\email{holger.kammeyer@kit.edu}
\email{roman.sauer@kit.edu}
\urladdr{https://topology.math.kit.edu/}

\subjclass[2010]{20G30 (Primary), 20G25, 11F75 (Secondary)}
\keywords{L2-Betti numbers, profinite completion, S-arithmetic groups}
\thanks{This research was supported by the Deutsche Forschungsgemeinschaft (DFG), grant numbers SA 1661/4-1 and KA 4641/1-1.}

\newtheorem{theorem}{Theorem}
\newtheorem{corollary}{Corollary}
\newtheorem{proposition}{Proposition}
\newtheorem{lemma}{Lemma}

\theoremstyle{definition}

\theoremstyle{remark}
\newtheorem{remark}{Remark}

\makeatletter
   \let\c@corollary=\c@theorem
   \let\c@proposition=\c@theorem
   \let\c@lemma=\c@theorem
   \let\c@definition=\c@theorem
   \let\c@remark=\c@theorem
   \let\c@example=\c@theorem
   \let\c@equation=\c@theorem
   \let\c@conjecture=\c@theorem
   \let\c@question=\c@theorem
\makeatother


\newcommand*{\arXiv}[1]{ \href{http://www.arxiv.org/abs/#1}{arXiv:\textbf{#1}}}

\newcommand*{\N}{\mathbb N}
\newcommand*{\Z}{\mathbb Z}
\newcommand*{\Q}{\mathbb Q}
\newcommand*{\R}{\mathbb R}
\newcommand*{\C}{\mathbb C}

\newcommand{\calO}{\mathcal{O}}

\newcommand{\legendre}[2]{\genfrac{(}{)}{}{}{#1}{#2}}

\DeclareMathOperator{\rank}{rank}

\DeclareMathOperator{\im}{im}

\DeclareMathOperator{\ind}{ind}

\DeclareMathOperator{\ad}{ad}

\DeclareMathOperator{\Hom}{Hom}

\DeclareMathOperator{\GL}{GL}
\DeclareMathOperator{\id}{id}
\DeclareMathOperator{\bSO}{\mathbf{SO}}

\DeclareMathOperator{\bGL}{\mathbf{GL}}

\DeclareMathOperator{\bSpin}{\mathbf{Spin}}

\DeclareMathOperator{\bG}{\mathbf{G}}

\newcounter{commentcounter}

\usepackage{ifthen,srcltx}
\newcommand{\showcomments}{yes}

\newsavebox{\commentbox}
\newenvironment{com}%
{\ifthenelse{\equal{\showcomments}{yes}}%
{\footnotemark
        \begin{lrbox}{\commentbox}
        \begin{minipage}[t]{1.25in}\raggedright\sffamily\tiny
        \footnotemark[\arabic{footnote}]}
{\begin{lrbox}{\commentbox}}}
{\ifthenelse{\equal{\showcomments}{yes}}
{\end{minipage}\end{lrbox}\marginpar{\usebox{\commentbox}}}
{\end{lrbox}}}


\DefineSimpleKey{bib}{myurl}

\newcommand\myurl[1]{\url{#1}}

\BibSpec{webpage}{%
  +{}{\PrintAuthors} {author}
  +{,}{ \textit} {title}
  +{,}{ } {note}
  +{,}{ \myurl} {myurl}
  +{}{ \parenthesize} {date}
  +{.}{ } {transition}
}

\newcommand{\ignore}[1]{}

\hyphenation{iso-mor-phism}

\begin{document}

\begin{abstract}
 In this note we refine examples by Aka from arithmetic to S-arithmetic groups to show that the vanishing of the $i$-th $\ell^2$-Betti number is not a profinite invariant for all $i\ge 2$. 
\end{abstract} 

\maketitle

\section{Introduction}

Let \(\mathcal{C}\) be a class of groups.  We call a group invariant \emph{profinite among \(\mathcal{C}\)} if it agrees for any two groups in \(\mathcal{C}\) which have the same set of finite quotients.  If \(\mathcal{C}\) is the class of all finitely generated \emph{residually finite} groups, we plainly say the invariant is \emph{profinite}.  

An example of a profinite invariant is the abelianization \(H_1(\Gamma)\), and hence also the first \emph{Betti number} \(b_1(\Gamma)\).  Bridson--Conder--Reid~\cite{Bridson-Conder-Reid:determining}*{Corollary~3.3} apply L\"uck's approximation theorem~\cite{Lueck:approximating} to conclude that the first \emph{\(\ell^2\)-Betti number} \(b^{(2)}_1(\Gamma)\) is profinite among finitely presented, residually finite groups; see \citelist{\cite{Lueck:l2-invariants} \cite{Kammeyer:introduction-to-l2}\cite{sauer-survey}} for an introduction to \(\ell^2\)-Betti numbers.  In contrast, Reid mentions in~\cite{Reid:profinite-properties}*{6.2, p.\,88} that examples of arithmetic groups with the same profinite completion constructed by Aka in~\cite{Aka:profinite-completions} have different higher \(\ell^2\)-Betti numbers, despite being finitely presented and residually finite.  It turns out that these groups actually provide counterexamples to the profiniteness of \(b^{(2)}_n\) for all even numbers \(n \ge 6\).  An expository account of the discussion so far can be found in the Master thesis of Stucki~\cite{Stucki:master-thesis}.

In this article, we refine the construction given in~\cite{Aka:profinite-completions} from arithmetic to \mbox{\(S\)-arithmetic} groups to see that for no \(n \ge 2\), the \(n\)-th \(\ell^2\)-Betti number is profinite among finitely presented, residually finite groups.

\begin{theorem} \label{thm:main-theorem}
  For \(k \ge 2\), fix distinct prime numbers \(p_1, \ldots, p_k \ge 89\) within the arithmetic progression \(17 +24 \N\) and set
  \[ \Gamma^k_\pm = \mathbf{Spin}(\langle \pm 1,\, \pm 1,\, \pm 1,\, \pm p_1 \cdots p_k,\, 3 \rangle)(\Z[\textstyle\frac{1}{p_1 \cdots p_k}]). \]
  Then \(\widehat{\Gamma^k_+} \cong \widehat{\Gamma^k_-}\), and \(b^{(2)}_n(\Gamma^k_+) > 0\) iff \(n = k\), whereas \(b^{(2)}_n(\Gamma^k_-) > 0\) iff \(n = k + 2\).
\end{theorem}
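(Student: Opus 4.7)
The proof splits into two largely independent parts that refine Aka's original arithmetic construction to the \(S\)-arithmetic setting.

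\textbf{Profinite matching.} Both forms \(q_+=\langle 1,1,1,p_1\cdots p_k,3\rangle\) and \(q_-=\langle -1,-1,-1,-p_1\cdots p_k,3\rangle\) share the discriminant \(3\,p_1\cdots p_k\) up to squares, so the question of \(\Q_v\)-equivalence reduces to a comparison of Hasse--Witt invariants. A direct Hilbert-symbol computation (pairs involving a \(1\) are trivial, the others combine via bimultiplicativity) yields \(\varepsilon_v(q_-)/\varepsilon_v(q_+)=(-1,\,p_1\cdots p_k)_v\). For odd primes \(q\nmid p_1\cdots p_k\) this is \(1\) because two units have trivial Hilbert symbol over \(\Q_q\); at \(q=p_i\) it equals the Legendre symbol \(\bigl(\tfrac{-1}{p_i}\bigr)\); and at \(v=2\) it equals \((-1)^{(p_i-1)/2}\). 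The hypothesis \(p_i\equiv 17\pmod{24}\) forces \(p_i\equiv 1\pmod 4\), trivializing all of these symbols, so \(q_+\cong q_-\) over every \(\Q_v\) with \(v\) finite. The smooth \(\mathbf{Spin}\)-group schemes then agree over \(\Z_v\) for \(v\notin S\) and over \(\Q_{p_i}\) for \(v\in S\). To globalize, I invoke the congruence subgroup property: each local factor \(\mathbf{Spin}(q_\pm)(\Q_{p_i})\) has positive \(\Q_{p_i}\)-rank (a five-variable form is isotropic over any \(p\)-adic field), so the \(S\)-rank of \(\Gamma^k_\pm\) is at least \(k\geq 2\), placing us in the higher-rank regime where CSP is known for spinor groups. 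A finite congruence kernel then identifies \(\widehat{\Gamma^k_\pm}\), up to a finite piece, with the adelic closure, which is determined by the (sign-independent) local data and therefore the same for both signs.

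\textbf{\(\ell^2\)-Betti numbers.} I view \(\Gamma^k_\pm\) as an irreducible lattice in \(\mathbf{Spin}(q_\pm)(\R)\times\prod_{i=1}^k\mathbf{Spin}(q_\pm)(\Q_{p_i})\) and apply the K\"unneth-type formula for \(\ell^2\)-Betti numbers of irreducible \(S\)-arithmetic lattices: the degree in which \(b^{(2)}_\bullet(\Gamma^k_\pm)\) fails to vanish is the sum of the nonvanishing degrees at each factor. Each \(p_i\)-adic spinor group has \(\Q_{p_i}\)-rank exactly one: after extracting a hyperbolic plane the residual three-dimensional form is (essentially) \(\langle 1,3,p_iu\rangle\) with \(u\in\Z_{p_i}^\times\), and this is anisotropic because \(-3\) is a non-residue modulo \(p_i\)---precisely what the \(p_i\equiv 2\pmod 3\) component of the residue condition supplies. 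Its Bruhat--Tits building is therefore a tree, and \(\ell^2\)-cohomology concentrates in degree \(1\). In the \(+\) case the archimedean factor \(\mathbf{Spin}(5)\) is compact, contributing only in degree \(0\), so \(b^{(2)}_n(\Gamma^k_+)>0\iff n=k\). In the \(-\) case the archimedean factor \(\mathbf{Spin}(1,4)\) is noncompact of equal rank with symmetric space \(\mathbb{H}^4\); its \(\ell^2\)-Betti numbers concentrate in the middle degree \(\dim(\mathbb{H}^4)/2=2\), and the K\"unneth sum gives \(b^{(2)}_n(\Gamma^k_-)>0\iff n=k+2\).

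\textbf{Main obstacle.} The crux is that the residue class \(17\pmod{24}\) and the lower bound \(89\) must accomplish several distinct tasks at once: \(p_i\equiv 1\pmod 4\) trivializes the Hilbert symbols for profinite matching, \(p_i\equiv 2\pmod 3\) pins the Witt index of \(q_\pm\) over \(\Q_{p_i}\) to exactly one (so that the \(p_i\)-adic buildings are trees rather than two-dimensional, the latter of which would inflate the nonvanishing degrees), and \(p_i\geq 89\) ensures that the CSP theorem can be invoked in the form needed, without small-prime exceptions that would leave a nontrivial discrepancy between the profinite and congruence completions. Verifying that \(17+24\N\) and the bound \(89\) are exactly what makes all of these requirements hold simultaneously---and that the passage from arithmetic to \(S\)-arithmetic does not break any step of Aka's argument---is the main technical content of the proof.
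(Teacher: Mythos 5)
Your overall architecture mirrors the paper's: establish $\widehat{\Gamma^k_+}\cong\widehat{\Gamma^k_-}$ by combining CSP with a local comparison of the forms, then compute $\ell^2$-Betti numbers via a K\"unneth formula over the $k+1$ local factors, with the $p$-adic factors contributing only in degree one because the local Witt index is pinned to exactly one by the condition $p_i\equiv 2\pmod 3$, and the archimedean factor contributing in degree $0$ (compact $\Spin(5)$) or $2$ (middle dimension of $\mathbb{H}^4$). The second half of your argument is essentially the paper's, including the anisotropy criterion via $\legendre{-3}{p_i}=-1$.

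There is, however, a real gap in the profinite-matching half. By CSP the profinite completion is $\prod_{p\nmid p_1\cdots p_k}\mathbf{Spin}(q^k_\pm)(\Z_p)$, so what must be compared is the \emph{integral} points over $\Z_p$, and for that you need a $\Z_p$-isometry of the quadratic lattices, not merely a $\Q_p$-isometry of the quadratic spaces. Your Hasse--Witt computation (which, as far as it goes, is correct: the ratio of Hasse invariants is $(-1,p_1\cdots p_k)_v$, and it is trivial everywhere) only gives $\Q_p$-isometry, and your sentence ``the smooth $\mathbf{Spin}$-group schemes then agree over $\Z_v$'' does not follow. For odd $p$ the step from $\Q_p$- to $\Z_p$-isometry of unimodular lattices can be patched (same rank and unit discriminant suffice), but at $p=2$ it genuinely fails: $\Q_2$-isometry of $\Z_2$-unimodular lattices does not imply $\Z_2$-isometry. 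This is exactly where the paper needs $p_i\equiv 1\pmod 8$ --- so that $p_1\cdots p_k$ is a square in $\Z_2^\times$ --- together with an explicit $4\times 4$ matrix over $\Z_2$ (using $\sqrt{-7}\in\Z_2$) realizing $\langle 1,1,1,1\rangle\cong\langle -1,-1,-1,-1\rangle$. You use only $p_i\equiv 1\pmod 4$ in this half, which is strictly weaker and insufficient; the $\pmod 8$ part of the residue condition $17\pmod{24}$ plays no role in your argument, which is a sign something is missing.

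Two smaller points. First, the paper's CSP result (via Prasad--Rapinchuk) gives a \emph{trivial}, not merely finite, congruence kernel, which is what makes $\widehat{\Gamma^k_\pm}$ literally equal to the product of $\Z_p$-points; a finite-but-nontrivial kernel would leave an unresolved extension problem. The point that has to be stressed here (and is nonclassical) is that $\mathbf{Spin}(q^k_+)$ is $\Q$-anisotropic, so the older isotropic CSP machinery does not apply directly. Second, the bound $p_i\ge 89$ is not there for CSP: it is imposed so that the residue characteristic is large enough for the Dymara--Januszkiewicz computation of the $\ell^2$-cohomology of the Bruhat--Tits building, as used via Petersen--Valette, to apply.
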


Of course, the result borrows from Dirichlet's theorem that the arithmetic progression \(17 + 24\N\) contains infinitely many primes. The precise meaning of $\Gamma^k_\pm$ is explained in Section~\ref{section:integral-spinor-groups}. The notation \(\widehat{\Gamma}\) refers to the \emph{profinite completion} of \(\Gamma\), the projective limit over the inverse system of all finite quotient groups of \(\Gamma\).  For finitely generated \(\Gamma\) and \(\Lambda\), it turns out that \(\widehat{\Gamma} \cong \widehat{\Lambda}\) if and only if \(\Gamma\) and \(\Lambda\) have the same set of finite quotients~\cite{Ribes-Zalesskii:profinite-groups}*{Corollary~3.2.8, p.\,89}.  The groups \(\Gamma^k_{\pm}\) are finitely presented as \(S\)-arithmetic subgroups of reductive algebraic groups~\cite{Platonov-Rapinchuk:algebraic-groups}*{Theorem~5.11, p.\,272}.  As such, they are also residually finite by Malcev's theorem~\cite{Nica:malcev-selberg}.

The proof of Theorem~\ref{thm:main-theorem} naturally splits into two parts: the construction of the isomorphism \(\widehat{\Gamma^k_+} \cong \widehat{\Gamma^k_-}\), and the calculation of the \(\ell^2\)-Betti numbers \(b^{(2)}_n(\Gamma^k_\pm)\).  For the first part, it is key to know that both the \(\Q\)-isotropic group \(\Gamma^k_-\) and the \(\Q\)-anisotropic group \(\Gamma^k_+\) of type \(B_2\) satisfy the \emph{congruence subgroup property (CSP)}.  For the sake of a self-contained treatment, Section~\ref{section:csp} summarizes the verification of CSP in this case because the version we apply was not entirely available when the standard references \cite{Margulis:discrete-subgroups} and \cite{Platonov-Rapinchuk:algebraic-groups} were written.  CSP reduces the construction of an isomorphism \(\widehat{\Gamma^k_+} \cong \widehat{\Gamma^k_-}\) to verifying that the spinor groups of the forms \(\langle \pm 1,\, \pm 1,\, \pm 1,\, \pm p_1 \cdots p_k, 3 \rangle\) are isomorphic over the \(p\)-adic integers \(\Z_p\).  As we verify in Section~\ref{section:integral-spinor-groups}, this is ensured because the quadratic forms are \(\Z_p\)-isometric.  At this point it enters that the primes \(p_i\) were so chosen that \(p_1, \ldots, p_k \equiv 1 \ (8)\).  

The second part of the proof is given in Section~\ref{section:quadratic-forms}.  We will explain that finding the nontrivial \mbox{\(\ell^2\)-cohomology} of \(\Gamma^k_\pm\) is essentially a matter of showing that the \(\Q_{p_i}\)-rank of the two spinor groups is equal to one.  The rank of a spinor group equals the Witt index of the quadratic form and it comes out of the Hasse--Minkowski theory of quadratic forms that the Witt index is one because the primes \(p_i\) were so chosen that \(p_1, \ldots, p_k \equiv 2 \ (3)\).  This explains how the arithmetic progression \(17 + 24\N\) comes into play: it consists precisely of the numbers that satisfy both relevant congruence relations according to the Chinese remainder theorem.

\section{CSP for spinor groups}
\label{section:csp}

Let \(k \subset \C\) be a number field and let \(\mathbf{G} \subset \mathbf{GL_n}\) be an (absolutely almost) simple and simply-connected linear algebraic \(k\)-group.  Let \(S\) be a finite set of places of \(k\) containing all the archimedean ones.  Recall that any place \(v\) defines a completion \(k_v\) which comes with a valuation ring \(\mathcal{O}_v\) if \(v\) is nonarchimedean.  The ring of \(S\)-integers in \(k\) is denoted by
\[ \mathcal{O}_{k, S} = \{ x \in k \colon v(x) \ge 0 \text{ for all } v \notin S \}.\]
We have a diagonal embedding of the group \(\mathbf{G}(\mathcal{O}_{k,S})\) into the \emph{adele points} \(\mathbf{G}(\mathbb{A}_{k,S})\) defined as the \emph{restricted product} \(\prod^r_{v \notin S} \mathbf{G}(k_v)\) consisting of elements with almost all coordinates in \(\mathbf{G}(\mathcal{O}_v)\).  The group \(\mathbf{G}(\mathbb{A}_{k, S})\) carries the locally compact colimit topology determined by the canonical embeddings of \(\prod_{v \in T \setminus S} \mathbf{G}(k_v)\times\prod_{v\not\in T}\mathbf{G}(\mathcal{O}_v)\) with \(|T| < \infty\).  The Kneser--Platonov \emph{strong approximation theorem} implies that the group \(\mathbf{G}(\mathcal{O}_{k,S})\) has closure \(\prod_{v \notin S} \mathbf{G}(\mathcal{O}_v)\) in \(\mathbf{G}(\mathbb{A}_{k,S})\), provided the group \(G_S = \prod_{v \in S} \mathbf{G}(k_v)\) is not compact.  The group \(\prod_{v \notin S} \mathbf{G}(\mathcal{O}_v)\) is compact and totally disconnected, thus \emph{profinite}.  The universal property of the profinite completion therefore yields a canonical epimorphism \(\widehat{\mathbf{G}(\mathcal{O}_{k,S})} \rightarrow \prod_{v \notin S} \mathbf{G}(\mathcal{O}_v)\).  The kernel \(C(k, \mathbf{G}, S)\) of this homomorphism is called the \emph{congruence kernel}.  It thus fits into a short exact sequence
\begin{equation} \label{eq:ses} 1 \longrightarrow C(k, \mathbf{G}, S) \longrightarrow \widehat{\mathbf{G}(\mathcal{O}_{k,S})} \longrightarrow \prod_{v \notin S} \mathbf{G}(\mathcal{O}_v) \longrightarrow 1. \end{equation}
The \(k\)-group \(\mathbf{G}\) is said to have the \emph{congruence subgroup property} with respect to \(S\) (in the rigorous sense) if \(C(k, \mathbf{G}, S)\) is trivial.  For more details and references up to this point, we refer to~\cite{Kammeyer:profinite-commensurability}*{Sections~2.2 and~2.3}.

The embeddings of \(\mathbf{G}(\mathcal{O}_{k,S})\) into \(\widehat{\mathbf{G}(\mathcal{O}_{k,S})}\) and \(\mathbf{G}(\mathbb{A}_{k,S})\) induce two a~priori different topologies on the \(S\)-arithmetic group \(\mathbf{G}(\mathcal{O}_{k,S})\).  A fundamental system of neighborhoods \(\mathcal{U}_a\) for the former consists of finite index normal subgroups.  For the latter, a fundamental system \(\mathcal{U}_c\) consists of the so-called \emph{principal congruence subgroups} defined by the kernels of the homomorphisms \(\mathbf{G}(\mathcal{O}_{k,S}) \rightarrow \mathbf{G}(\mathcal{O}_{k,S} / \mathfrak{a})\) given by reducing matrix entries mod~\(\mathfrak{a}\) for nonzero ideals \(\mathfrak{a} \subset \mathcal{O}_{k, S}\).  Since \(S\)-arithmetic groups are finitely generated, the congruence subgroup property thus says precisely that each finite index subgroup of \(\mathbf{G}(\mathcal{O}_{k,S})\) is a \emph{congruence subgroup}, meaning it contains a principal congruence subgroup.  This explains the terminology.

When setting up proofs of CSP, it is useful to shift attention from \(\mathbf{G}(\mathcal{O}_{k,S})\) to the \(k\)-rational points \(\mathbf{G}(k)\) by considering \(\mathcal{U}_a\) and \(\mathcal{U}_c\) as fundamental systems of unit neighborhoods in \(\mathbf{G}(k)\) rather than in \(\mathbf{G}(\mathcal{O}_{k,S})\).  These systems define the \emph{arithmetic topology} and the \emph{congruence topology} on \(\mathbf{G}(k)\), respectively.  We obtain associated completions with respect to the canonical uniform structures.  For the arithmetic topology, this completion is commonly denoted \(\widehat{\mathbf{G}(k)}\), accepting a slight abuse of notation.  For the congruence topology, the completion can be identified with \(\mathbf{G}(\mathbb{A}_{k,S})\) because the latter is a complete Hausdorff group which contains \(\mathbf{G}(k)\) densely by strong approximation.

Since \(\mathcal{U}_a\) is finer than \(\mathcal{U}_c\), we obtain a unique homomorphism \(\phi \colon \widehat{\mathbf{G}(k)} \rightarrow \mathbf{G}(\mathbb{A}_{k,S})\) which restricts to the identity on \(\mathbf{G}(k)\).  We observe that \(\widehat{\mathbf{G}(k)}\) contains \(\widehat{\mathbf{G}(\mathcal{O}_{k,S})}\) as the closure of \(\mathbf{G}(\mathcal{O}_{k,S})\), hence the image \(\im(\phi)\) contains the open group \(\prod_{v \notin S} \mathbf{G}(\mathcal{O}_v)\), and thus is itself open.  This shows that \(\im(\phi)\) is closed.  But it is moreover dense because it contains \(\mathbf{G}(k)\), so \(\phi\) is surjective. It is obvious that the congruence kernel lies in $\ker(\phi)$, and it is easy to verify that they coincide. 
Thus the congruence kernel also appears in the short exact sequence
\begin{equation} \label{eq:extension} 1 \longrightarrow C(k, \mathbf{G}, S) \xrightarrow{\ \chi \ } \widehat{\mathbf{G}(k)} \xrightarrow{ \ \varphi \ } \mathbf{G}(\mathbb{A}_{k, S}) \longrightarrow 1. \end{equation}
It is in this set-up that the congruence kernel reveals its secrets.  Considering the compact abelian group \(\R / \Z\) as trivial \(\mathbf{G}(\mathbb{A}_{k, S})\)-module, the group extension~\eqref{eq:extension} gives rise to a five term inflation-restriction exact sequence in continuous cohomology as in~\cite{Serre:local-fields}*{Remark, p.\,118}.  The first four terms are
\begin{align*}
  0 \rightarrow & H^1_{\textup{ct}}(\mathbf{G}(\mathbb{A}_{k,S}), \R / \Z) \xrightarrow{\varphi^*} H^1_{\textup{ct}}(\widehat{\mathbf{G}(k)}, \R / \Z) \xrightarrow{\chi^*} H^1_{\textup{ct}}(C(k, \mathbf{G}, S), \R / \Z)^{\mathbf{G}(\mathbb{A}_{k,S})} \\
  & \xrightarrow{\psi} H^2_{\textup{ct}}(\mathbf{G}(\mathbb{A}_{k,S}), \R / \Z).
\end{align*}
The morphisms \(\varphi^*\), \(\chi^*\), and \(\psi\) are called \emph{inflation}, \emph{restriction}, and \emph{transgression}, respectively.  The group \(H^1_{\textup{ct}}(\widehat{\mathbf{G}(k)}, \R / \Z)\) consists of continuous homomorphisms \(\widehat{\mathbf{G}(k)} \rightarrow \R / \Z\) to an abelian group, hence they are trivial on the closure of the commutator subgroup \([\widehat{\mathbf{G}(k)}, \widehat{\mathbf{G}(k)}]\).  Thus, if we assume
\begin{flalign} \label{eq:perfect}
  \text{the group } \mathbf{G}(k) \text{ of } k \text{-rational points is perfect,} \tag{A}
\end{flalign}
then \(H^1_{\textup{ct}}(\widehat{\mathbf{G}(k)}, \R / \Z)\) is trivial, implying \(\psi\) is injective.  If we moreover assume
\begin{flalign} \label{eq:central}
  \text{the extension of topological groups~\eqref{eq:extension} is central,} \tag{B}
\end{flalign}
then it is classified by an element \(e \in H^2_{\textup{ct}}(\mathbf{G}(\mathbb{A}_{k,S}), C(k, \mathbf{G}, S))\) and the group \(H^1_{\textup{ct}}(C(k, \mathbf{G}, S), \R / \Z)^{\mathbf{G}(\mathbb{A}_{k,S})}\) is just the \emph{Pontryagin dual} \(\overline{C(k, \mathbf{G}, S)} = \Hom(C(k, \mathbf{G}, S), \R / \Z)\) of \(C(k, \mathbf{G}, S)\).  A homomorphism \(f \in \overline{C(k, \mathbf{G}, S)}\) defines a change of coefficients homomorphism
\[ f_* \colon H^2_{\textup{ct}}(\mathbf{G}(\mathbb{A}_{k,S}), C(k, \mathbf{G}, S)) \longrightarrow H^2_{\textup{ct}}(\mathbf{G}(\mathbb{A}_{k,S}), \R / \Z) \]
and transgression has the description \(\psi(f) = f_* e\).  The fact that the extension \eqref{eq:extension} splits over \(\mathbf{G}(k)\) thus says that \(\im \psi\) lies within
\[ \ker \left(H^2_{\textup{ct}}(\mathbf{G}(\mathbb{A}_{k,S}), \R/\Z) \xrightarrow{\textup{restriction}} H^2(\mathbf{G}(k), \R / \Z) \right). \]
Here in the codomain we use ordinary group cohomology which is the same as continuous cohomology when \(\mathbf{G}(k)\) carries the discrete topology.  The above kernel is known as the \emph{metaplectic kernel} \(M(k, \mathbf{G}, S)\).  Thus if
\begin{flalign} \label{eq:metaplectic-trivial}
  \text{the metaplectic kernel } M(k, \mathbf{G}, S) \text{ is trivial,} \tag{C}
\end{flalign}
then so is \(C(k, \mathbf{G}, S)\).  Here is a summary of our discussion.

\begin{theorem}
  Suppose assumptions \eqref{eq:perfect}, \eqref{eq:central}, and \eqref{eq:metaplectic-trivial} hold true.  Then the \(k\)-group \(\mathbf{G}\) has the congruence subgroup property with respect to \(S\).
\end{theorem}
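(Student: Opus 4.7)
The plan is to combine the three assumptions with the four-term inflation-restriction sequence displayed just before the statement of the theorem, in order to show that the Pontryagin dual of $C(k,\mathbf{G},S)$ vanishes, and then to deduce $C(k,\mathbf{G},S)=1$ via Pontryagin duality. Assumption~\eqref{eq:perfect} kills the second term of that sequence: a continuous character $\widehat{\mathbf{G}(k)}\to\R/\Z$ annihilates the closure of $[\widehat{\mathbf{G}(k)},\widehat{\mathbf{G}(k)}]$, which contains the dense subgroup $\mathbf{G}(k)=[\mathbf{G}(k),\mathbf{G}(k)]$, so the character is zero. By exactness the transgression $\psi$ is therefore injective.

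Next I would invoke assumption~\eqref{eq:central} to identify the third term of the sequence with the Pontryagin dual $\overline{C(k,\mathbf{G},S)}$ (centrality trivializes the $\mathbf{G}(\mathbb{A}_{k,S})$-action) and to view~\eqref{eq:extension} as being classified by a class $e\in H^2_{\textup{ct}}(\mathbf{G}(\mathbb{A}_{k,S}),C(k,\mathbf{G},S))$ with transgression given by $\psi(f)=f_*e$. Because the canonical inclusion $\mathbf{G}(k)\hookrightarrow\widehat{\mathbf{G}(k)}$ is a set-theoretic section of $\varphi$ over $\mathbf{G}(k)$, the class $e$ restricts to zero along $\mathbf{G}(k)\to\mathbf{G}(\mathbb{A}_{k,S})$; pushing forward along any $f$ preserves this, so $\psi(f)$ lies in the metaplectic kernel $M(k,\mathbf{G},S)$ for every $f$. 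Assumption~\eqref{eq:metaplectic-trivial} then forces $\psi=0$, and combined with the injectivity from the previous step we obtain $\overline{C(k,\mathbf{G},S)}=0$.

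To conclude, I would use that $C(k,\mathbf{G},S)$ is the closed kernel of the canonical surjection from the profinite group $\widehat{\mathbf{G}(\mathcal{O}_{k,S})}$, hence is itself profinite and, by centrality, abelian. Pontryagin duality for compact abelian groups then upgrades the vanishing of the dual to $C(k,\mathbf{G},S)=1$, which is precisely the assertion of CSP.

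I expect the main obstacle to lie not in any single computation but in the careful bookkeeping required to verify that the inflation-restriction sequence, the classifying class $e$, the transgression formula $\psi(f)=f_*e$, and the compatibility between the splitting over $\mathbf{G}(k)$ and the metaplectic restriction all remain valid in the setting of continuous cohomology of the locally compact topological groups at hand, rather than in the purely discrete setting. Once these formal compatibilities are in place, the three hypotheses plug in essentially as a chase through the four displayed terms.
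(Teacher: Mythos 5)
Your proof follows the same route as the paper: use \eqref{eq:perfect} to kill the second term of the inflation-restriction sequence and conclude $\psi$ is injective, use \eqref{eq:central} to identify the third term with $\overline{C(k,\mathbf{G},S)}$ and to write $\psi(f)=f_*e$, use the splitting over $\mathbf{G}(k)$ to confine $\im\psi$ to the metaplectic kernel, apply \eqref{eq:metaplectic-trivial}, and finish by Pontryagin duality for the compact abelian group $C(k,\mathbf{G},S)$. One small slip: the class $e$ restricts to zero in $H^2(\mathbf{G}(k),\R/\Z)$ because the inclusion $\mathbf{G}(k)\hookrightarrow\widehat{\mathbf{G}(k)}$ is a \emph{group-homomorphism} section of $\varphi$ over $\mathbf{G}(k)$, not merely a set-theoretic one --- every extension admits set-theoretic sections, so that alone would not trivialize the cohomology class.
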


\begin{theorem} \label{thm:spin-csp}
  Let \(q\) be a quadratic form over a number field \(k\) and let \(S\) be a finite set of places of \(k\) containing all the archimedean ones.  Suppose
  \begin{enumerate}[(i)]
  \item the form \(q\) has at least five variables, \label{item:five-variables}
  \item the set \(S\) contains at least one nonarchimedean place, \label{item:nonarchimedean-place}
  \item the sum of the Witt indices of \(q\) over \(k_v\) for \(v \in S\) is at least two. \label{item:witt-index}
  \end{enumerate}
  Then the \(k\)-group \(\mathbf{Spin}(q)\) has CSP with respect to \(S\).
\end{theorem}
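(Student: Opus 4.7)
My plan is to verify the three hypotheses \eqref{eq:perfect}, \eqref{eq:central} and \eqref{eq:metaplectic-trivial} of the preceding theorem for $\mathbf{G} = \mathbf{Spin}(q)$. Under assumption~(i), this is an absolutely almost simple, simply connected $k$-group of type $B_n$ with $n\ge 2$ or $D_n$ with $n\ge 3$, so the general framework of Section~\ref{section:csp} is applicable, and the theorem follows once (A)--(C) are established.

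For \eqref{eq:perfect}, perfectness of $\mathbf{Spin}(q)(k)$ is a classical fact for non-degenerate quadratic forms in at least five variables: via the spin double cover and the Cartan--Dieudonn\'e theorem one reduces to the well-known perfectness of the commutator subgroup of the orthogonal group, the latter being generated by products of pairs of reflections which are expressible as commutators by an explicit identity. I would cite the standard treatment in O'Meara's book on the classical groups.

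Hypothesis \eqref{eq:central} is the substantive step. Since the Witt index of $q$ over $k_v$ equals the $k_v$-rank of $\mathbf{Spin}(q)$, condition~(iii) says precisely that the $S$-rank of $\mathbf{G}$ is at least two. In this higher-rank regime, centrality of the congruence kernel for simply connected simple groups of types $B_n$ and $D_n$ is the content of Raghunathan's theorem on the Serre conjecture, extended and refined by Rapinchuk and others to also cover the case in which the global group is $k$-anisotropic, as long as $S$ contains a non-archimedean place at which $\mathbf{G}$ is isotropic. Hypotheses~(ii) and~(iii) together ensure that such a place exists, so these results apply and yield centrality of $C(k,\mathbf{G},S)$. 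This is where I expect the main difficulty to lie, since the precise form of centrality required here post-dates the standard textbook references and needs to be assembled from several papers, as the authors already signalled in the introduction to Section~\ref{section:csp}.

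Finally, \eqref{eq:metaplectic-trivial} reduces to Prasad--Rapinchuk's computation of the metaplectic kernel for absolutely almost simple, simply connected groups. Their main theorem shows that $M(k,\mathbf{G},S)$ is contained in a finite group of roots of unity in $k$, and vanishes whenever $S$ contains a non-archimedean place at which $\mathbf{G}$ is isotropic --- which is exactly what (ii) and (iii) guarantee in our situation. Combining the three verifications with the preceding theorem yields CSP for $\mathbf{Spin}(q)$ with respect to $S$, completing the proof.
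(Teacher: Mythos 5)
Your overall strategy --- verify conditions \eqref{eq:perfect}, \eqref{eq:central}, \eqref{eq:metaplectic-trivial} and invoke the preceding abstract theorem --- is exactly the paper's. The references you reach for, however, differ. For \eqref{eq:perfect} the paper cites Kneser's Satz~C (which gives the stronger statement that $\bSpin(q)(k)$ has no non-central normal subgroups for $\ge 5$ variables); your route through Cartan--Dieudonn\'e plus O'Meara is plausible, but note that perfectness of a central $\{\pm1\}$-extension of a perfect group is not automatic --- one must still show $-1$ is a commutator in $\bSpin(q)(k)$, which is precisely the extra work those classical treatments do. For \eqref{eq:central} the paper does not invoke the Raghunathan--Rapinchuk line at all: it uses Kneser's \emph{Normalteiler ganzzahliger Spingruppen}, Satz~11.1, together with Vaserstein's Zusatz~1 to relax Kneser's $\ge 8$-variable hypothesis to $\ge 5$. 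Your alternative via Raghunathan and its extensions to anisotropic groups would also work, but the Kneser route is the one tailored to spin groups and is what the authors actually assemble.

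One genuine slip: in both \eqref{eq:central} and \eqref{eq:metaplectic-trivial} you claim that hypotheses~(ii) and~(iii) together yield a non-archimedean place $v\in S$ at which $\bSpin(q)$ is $k_v$-isotropic. That does not follow from (ii) and (iii) alone --- the Witt-index total of $\ge 2$ in (iii) could in principle be concentrated entirely at archimedean places, leaving the finite place from~(ii) anisotropic. What actually forces isotropy at every finite place is hypothesis~(i): a nondegenerate quadratic form in $\ge 5$ variables is isotropic over any $p$-adic field (Lam, Theorem~2.12), and then~(ii) supplies a finite place in $S$. So the correct attribution, and the one the paper uses for \eqref{eq:metaplectic-trivial}, is (i)$+$(ii), with (iii) reserved for the rank-$\ge 2$ input to centrality. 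Since all three hypotheses hold, the conclusion is unaffected, but the dependency bookkeeping in your write-up is wrong and should be corrected.
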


\begin{proof}
  We have to verify the three assertions from above.

\smallskip \noindent \eqref{eq:perfect}: It is true more generally that \(\mathbf{Spin}(k)\) has no non-central normal subgroups.  This result, due to Kneser~\cite{Kneser:orthogonale-gruppen}*{Satz~C}, is one of the earliest in this circle of ideas. Here assumption~\eqref{item:five-variables} enters.

\smallskip \noindent \eqref{eq:central}: This is again due to Kneser~\cite{Kneser:normalteiler}*{Satz~11.1} and dates back more than two decades later.  Beware that the cited theorem assumes \(q\) has \(\ge 8\) variables but the argument can easily be adapted to \(\ge 5\) variables, as was noticed by Vaserstein, [ibid., Zusatz~1]. Here assumptions~\eqref{item:five-variables} and~\eqref{item:witt-index} enter.

\smallskip \noindent \eqref{eq:metaplectic-trivial}: This follows from the definite solution of the metaplectic problem due to Prasad--Rapinchuk~\cite{Prasad-Rapinchuk:computation}*{Main Theorem} another two decades later.  It says in particular that \(M(k, \mathbf{G}, S)\) is trivial if \(S\) contains a finite place \(v\) such that \(\mathbf{G}\) is \(k_v\)-isotropic.  So we only need to recall that quadratic forms in five or more variables are isotropic at any finite place~\cite{Lam:quadratic-forms}*{Theorem~2.12, p.\,158} and apply Lemma~\ref{lemma:rank-equals-witt-index} below.  Here assumptions~\eqref{item:five-variables} and~\eqref{item:nonarchimedean-place} enter.
\end{proof}

\begin{remark}
  We stress that the main value of Theorem~\ref{thm:spin-csp} is that \(\mathbf{Spin}(q)\) is allowed to be \(k\)-anisotropic.  Assuming that \(q\) and thus \(\mathbf{Spin}(q)\) is \(k\)-isotropic, earlier proofs of \eqref{eq:perfect} \citelist{\cite{Dieudonne:geometrie} \cite{Eichler:quadratische-formen}}, \eqref{eq:central} \citelist{\cite{Raghunathan:on-csp} \cite{Vaserstein:structure}}, and \eqref{eq:metaplectic-trivial} \citelist{\cite{Kneser:normalteiler} \cite{Prasad-Raghunathan:metaplectic-kernel}} are available.
\end{remark}

Let us denote the two quadratic forms occurring in Theorem~\ref{thm:main-theorem} by \(q^k_{\pm}\).

\begin{corollary} \label{cor:spin-csp}
For each \(k \ge 2\), the \(\Q\)-groups \(\mathbf{Spin}(q^k_+)\) and \(\mathbf{Spin}(q^k_-)\) have the congruence subgroup property with respect to \(S = \{\infty, p_1, \ldots, p_k\}\).
\end{corollary}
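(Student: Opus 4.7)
The plan is to apply Theorem~\ref{thm:spin-csp} directly to each of the forms $q^k_\pm$ over $k = \Q$ with the set of places $S = \{\infty, p_1, \ldots, p_k\}$. All that needs to be done is to verify the three hypotheses \eqref{item:five-variables}, \eqref{item:nonarchimedean-place}, and \eqref{item:witt-index} of that theorem.

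The first two are immediate from the construction. By definition, each form $q^k_\pm = \langle \pm 1,\, \pm 1,\, \pm 1,\, \pm p_1\cdots p_k,\, 3\rangle$ is a non-degenerate quadratic form in exactly five variables, settling \eqref{item:five-variables}, and $S$ contains the nonarchimedean places $p_1, \ldots, p_k$ with $k \geq 2 \geq 1$, settling \eqref{item:nonarchimedean-place}.

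The only slightly substantive point is the Witt index condition \eqref{item:witt-index}. Here I would invoke the standard Hasse--Minkowski result that every non-degenerate quadratic form in five or more variables over a $p$-adic field is isotropic, which is precisely \cite{Lam:quadratic-forms}*{Theorem~2.12, p.\,158} and is already used in the proof of Theorem~\ref{thm:spin-csp}. Applied to $q^k_\pm$ at each prime $p_i \in S$, this gives a Witt index of at least one at each such place. Since $k \geq 2$, the primes $p_1$ and $p_2$ alone contribute a total of at least two to the sum over $S$, irrespective of the archimedean Witt index (which in fact vanishes for $q^k_+$, as that form is positive definite over $\R$). Thus \eqref{item:witt-index} holds for both $q^k_+$ and $q^k_-$.

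No serious obstacle is anticipated; once the hypotheses are aligned, Theorem~\ref{thm:spin-csp} delivers CSP for $\mathbf{Spin}(q^k_+)$ and $\mathbf{Spin}(q^k_-)$ simultaneously. The one thing to keep in mind is to draw the required Witt index contributions from the nonarchimedean places in $S$, since the $\Q$-anisotropic form $q^k_+$ carries no Witt index at $\infty$.
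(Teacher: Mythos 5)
Your proof is correct and takes essentially the same route as the paper: invoke Theorem~\ref{thm:spin-csp} and verify hypothesis \eqref{item:witt-index} via the isotropy of five-variable forms over $p$-adic fields \cite{Lam:quadratic-forms}*{Theorem~2.12, p.\,158}, using $k\geq 2$ to supply the two needed nonarchimedean Witt index contributions. You are a bit more explicit than the paper about why the archimedean place cannot be relied upon for $q^k_+$, which is a helpful clarification.
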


\begin{proof}
  This is immediate from Theorem~\ref{thm:spin-csp} noting that \(q^k_{\pm}\) has Witt index at least one over \(\Q_{p_i}\) for \(i = 1, \ldots, k\) again by~\cite{Lam:quadratic-forms}*{Theorem~2.12, p.\,158}\footnote{In Proposition~\ref{prop: index one} we will show that the Witt index of \(q^k_{\pm}\) over \(\Q_{p_i}\) is precisely one.}. 
\end{proof}

\section{Integral spinor groups and profinite completions}
\label{section:integral-spinor-groups}

Let $k$ be a field of characteristic zero and let $(V,q)$ be a \emph{quadratic space}: a finite-dimensional $k$-vector space equipped with a nondegenerate quadratic form~$q$. If $(e_1,\dots,e_n)$ is the standard basis of $V=k^n$ then the quadratic form with $q(e_i)=a_i$ is denoted by $\langle a_1,\dots,a_n\rangle$. 

In the following, unadorned tensor products are taken over~$k$. 

The \emph{Clifford algebra} $C(V,q)$ is the quotient of the tensor algebra $T(V)$ by the two-sided ideal $I$ generated by the elements of the form $x\otimes x-q(x)$ for $x\in V$. The tensor algebra $T(V)=T_0(V)\oplus T_1(V)$ is graded according to the parity of length of tensors. Since 
$I$ is a homogeneous ideal, the Clifford algebra inherits a grading $C(V,q)=C_0(V,q)\oplus C_1(V,q)$. 
The obvious map $V\to C(V,q)^\circ$ extends to a homomorphism 
\[ \ast\colon C(V,q)\to C(V,q)^\circ,\]
where the latter denotes the opposite Clifford algebra. 
We have $(x_1\dots x_n)^\ast=x_n\dots x_1$, so $\ast$ is an involution on the Clifford algebra. 
Let \[\ad\colon C(V,q)^\times\to \GL(C(V,q))\] be the conjugation homomorphism sending $g$ to $x\mapsto gxg^{-1}$. 
Let $A$ be a commutative $k$-algebra. Then $\ad$ induces a homomorphism $\ad_A\colon (A\otimes C(V,q))^\times\to \GL(A\otimes C(V,q))$. The involution also extends to $A\otimes C(V,q)$. 
\begin{remark}
Assume that $\ad_A(g)$ with $g\in (A\otimes C_0(V,q))^\times$ preserves the subspace $A\otimes V\subset A\otimes C(V,q)$. Then 
    \[ gxg^{-1}=(gxg^{-1})^\ast=(g^\ast)^{-1}xg^\ast \]
    for every $x\in A\otimes V$. So $g^\ast g$ is in the center $Z(A\otimes C(V,q))$ for which 
    we have $A\otimes Z(C(V,q))=Z(A\otimes C(V,q))$. 
    But $Z(C(V,q))\cap C_0(V,q)=k$~\cite{milne}*{Proposition~24.58 on p.~531} and hence $g^\ast g\in A^\times$. 
\end{remark}
    The \emph{spinor group} for $(V,q)$ is the functor that assigns to a $k$-algebra $A$ the 
    group 
    \[ \bSpin(q)(A)=\bigl\{g\in (A\otimes C_0(V,q))^\times\mid \ad_A(g)(A\otimes V)=A\otimes V, g^\ast g=1_A\bigr\}.\]
    See~\cite{milne}*{Section~24 i} for more information on spinor groups. 
The spinor group is an almost simple, simply connected $k$-group. 
The morphism
\begin{equation} \label{eq:covering-map} \pi\colon \bSpin(q)\to\bSO(q) \end{equation}
given by the restricting $\ad_A$ to $A\otimes V$ is a central isogeny. 
As a $k$-group $\bSpin(q)$ has an embedding into a general linear group but we want to have a closer look at a particular one next. 

Let $\calO$ be a domain of characteristic zero. Let $L$ be a finitely 
generated free $\calO$-module and $q_L\colon L\times L\to \calO$ 
a quadratic form. Let $k$ be the fraction field of $\calO$, $V=k\otimes_\calO L$ and $q\colon V\times V\to k$ the induced quadratic form. 

Let $\Lambda L$ be the exterior algebra over $L$, which is a free $\calO$-module of rank $2^{\rank_\calO (L)}$. 
It is obvious that $\Lambda L\subset \Lambda V$ is an $\calO$-lattice in $\Lambda V$. Further, the \emph{symmetrization homomorphism} 
$\operatorname{sym}\colon \Lambda V\to C(V,q)$ whose restriction on 
$\Lambda^r V$ is given by 
\[ x_1\wedge \dots\wedge x_r\mapsto \frac{1}{r!}\sum_{\sigma\in \Sigma(r)}\operatorname{sign}(\sigma) x_{\sigma(1)}\otimes\dots\otimes x_{\sigma(r)}\] 
is an isomorphism of $k$-vector spaces~\cite{lawson+michelson}*{Proposition~1.3 on p.~10} and thus embeds $\Lambda L$ as an $\calO$-lattice into $C(V,q)$. 
So every $\calO$-basis of $L$ yields an isomorphism of 
the $k$-group $\bG(A)=GL(A\otimes C(V,q))$ with $\bGL_{2^d}$ where $d=2^{\dim V}$ and an isomorphism $GL_{2^d}(\calO)\cong GL(\Lambda L)$. Right multiplication by elements in $(A\otimes C(V,q))^\times$ on $C(V,q)$ induces a $k$-embedding \[j\colon \bSpin(q)\to \bG\cong \bGL_{2^d}.\] 
 Without referring to the theory of group schemes over arbitrary rings, we can now simply \emph{define} the $\calO$-points of $\bSpin(q)$ as 
\begin{equation}\label{eq: integral points of spinor groups}
\bSpin(q_L)(\calO):=\im j(k)\cap GL(\operatorname{sym}(\Lambda L))=\im j(k)\cap GL_{2^d}(\calO).
\end{equation}

The above definition is functorial with respect to isomorphisms of quadratic spaces. 

\begin{lemma} \label{lemma:isomorphic-spin-groups}
Let $(L,q_L)$ and $(L',q_{L'})$ be isomorphic quadratic spaces over $\calO$. Then the associated quadratic spaces $(V,q)$ and $(V',q')$ over $k$ are isomorphic and we obtain a commutative square with 
vertical group isomorphisms:
\[\begin{tikzcd}
\bSpin(q_L)(\calO)\arrow[d,"\cong"]\arrow[r,hook] & \bSpin(q)(k)\arrow[d, "\cong"]\\
\bSpin(q_{L'})(\calO)\arrow[r,hook] & \bSpin(q)(k)
\end{tikzcd}\]
\end{lemma}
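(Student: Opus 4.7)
The plan is to push the given isometry $\phi\colon (L,q_L)\to(L',q_{L'})$ through each of the constructions entering the definition of integral points and to record that every step is natural. Tensoring with $k$ first yields an isometry $\phi_k\colon(V,q)\to(V',q')$ of quadratic $k$-spaces, which already proves the first assertion of the lemma.

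By the universal property of Clifford algebras, $\phi_k$ extends uniquely to a graded $k$-algebra isomorphism $C(\phi_k)\colon C(V,q)\to C(V',q')$, and the formula $(x_1\cdots x_n)^\ast=x_n\cdots x_1$ shows that $C(\phi_k)$ intertwines the two involutions. For any commutative $k$-algebra $A$, extending scalars gives an isomorphism $A\otimes C(V,q)\to A\otimes C(V',q')$ that carries $A\otimes V$ to $A\otimes V'$, intertwines the adjoint actions, and commutes with the involutions. Hence the conditions $\ad_A(g)(A\otimes V)=A\otimes V$ and $g^\ast g=1_A$ correspond under $C(\phi_k)$, yielding a natural-in-$A$ group isomorphism $\bSpin(\phi_k)\colon\bSpin(q)(A)\xrightarrow{\cong}\bSpin(q')(A)$. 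Specialising to $A=k$ gives the right vertical arrow of the diagram, after the identification $\bSpin(q')(k)=\bSpin(q)(k)$ implicit in the statement.

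It remains to show that $\bSpin(\phi_k)$ restricts to a bijection between the integral points. By functoriality, $\phi$ induces an isomorphism $\Lambda\phi\colon\Lambda L\to\Lambda L'$ of exterior $\calO$-algebras, and the defining formula for $\operatorname{sym}$ is manifestly natural in the underlying vector space. Consequently the square
\[\begin{tikzcd}
\Lambda L \arrow[r, "\Lambda\phi"] \arrow[d, "\operatorname{sym}"'] & \Lambda L' \arrow[d, "\operatorname{sym}"] \\
C(V,q) \arrow[r, "C(\phi_k)"'] & C(V',q')
\end{tikzcd}\]
commutes, so $C(\phi_k)$ sends the $\calO$-lattice $\operatorname{sym}(\Lambda L)$ isomorphically onto $\operatorname{sym}(\Lambda L')$. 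Under the bases obtained by fixing an $\calO$-basis of $L$ and transporting it via $\phi$ to one of $L'$, both embeddings $j$ and $j'$ land in the same $\bGL_{2^d}$ and $C(\phi_k)$ corresponds to the identity on $\GL_{2^d}(\calO)$. Combined with the definition~\eqref{eq: integral points of spinor groups}, this yields the desired restriction, and the diagram of the lemma commutes by construction.

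The one step that demands care is the bookkeeping in the last paragraph: the subgroup $\bSpin(q_L)(\calO)$ is defined as the intersection of $\im j(k)$ with $\GL(\operatorname{sym}(\Lambda L))$, and each of these two pieces transforms under $C(\phi_k)$. One has to check that the two transformations are compatible, which is exactly what the naturality square above encodes; once this is in place, the remainder of the argument is essentially formal.
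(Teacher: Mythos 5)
Your argument is correct and follows essentially the same route as the paper: the paper's proof is a terse version of yours, consisting of the observation that the symmetrization isomorphism is functorial, yielding a commuting square of Clifford algebras and lattices, from which the statement follows. You have merely spelled out the functoriality of the Clifford, exterior-algebra, and $\bSpin$ constructions that the paper declares obvious.
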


\begin{proof} The symmetrization isomorphism is obviously functorial. So 
an isomorphism $(L,q_L)\cong (L',q_{L'})$ induces a commutative diagram  
\[\begin{tikzcd}
\Lambda L\arrow[r, hook]\arrow[d,"\cong"] & \Lambda V\arrow[r, "\cong"]\arrow[d, "\cong"] & C(V,q)\arrow[d,"\cong"]\\
\Lambda L'\arrow[r, hook] & \Lambda V'\arrow[r, "\cong"] & C(V',q')
\end{tikzcd}\]
which clearly implies the lemma. 
\end{proof}

\begin{lemma} \label{lemma:isometric-forms}
  Let \(p_1, \ldots, p_k\) be primes congruent to \(1\) mod \(8\).  Then the quadratic forms \(\langle 1, \ 1, \ 1, \ p_1 \cdots p_k \rangle\) and \(\langle -1, -1, -1, -p_1 \cdots p_k \rangle\) are isometric over \(\Z_p\) for all primes \(p\).
\end{lemma}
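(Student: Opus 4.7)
My plan is to verify the $\Z_p$-isometry separately at each prime $p$, exploiting the consequence of the hypothesis that $N := p_1 \cdots p_k \equiv 1 \pmod 8$. Write $q_+ = \langle 1,1,1,N\rangle$ and $q_- = \langle -1,-1,-1,-N\rangle = -q_+$.

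For any odd prime $p$, the classification of quadratic $\Z_p$-lattices via Jordan decomposition says isometry is detected by the ranks and the determinants of the constant-scale blocks modulo $(\Z_p^\times)^2$. If $p \nmid N$, both $q_\pm$ are rank-$4$ unimodular with determinant $N$, so they are $\Z_p$-isometric. If $p = p_i$ divides $N$, both lattices decompose as (rank-$3$ unimodular) $\perp$ (rank-$1$ of scale $p_i$), and within each block the determinants of $q_+$ and $q_-$ differ only by the factor $-1$. Since $p_i \equiv 1 \pmod 4$, Hensel's lemma makes $-1$ a square in $\Z_{p_i}^\times$, so the corresponding blocks are isometric and hence so are $q_\pm$.

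The dyadic case $p=2$ is the main obstacle: here rank-plus-determinant is no longer a complete invariant system, so an explicit anti-isometry must be exhibited. The hypothesis $N \equiv 1 \pmod 8$ (rather than merely $\pmod 4$) becomes essential at this point, since it implies $N \in (\Z_2^\times)^2$ and thus reduces the problem to showing $\langle 1,1,1,1\rangle \cong \langle -1,-1,-1,-1\rangle$ over $\Z_2$. For this I plan to use the Lipschitz order $\Lambda = \Z_2 \oplus \Z_2 i \oplus \Z_2 j \oplus \Z_2 k$ inside the unique quaternion division algebra over $\Q_2$, whose reduced norm $\mathrm{Nm}$ equals $a^2+b^2+c^2+d^2$, i.e., the form $\langle 1,1,1,1\rangle$ in the basis $\{1,i,j,k\}$. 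Since $-15 \equiv 1 \pmod 8$, the element $-15$ is a square in $\Z_2^\times$, so there exists $a \in \Z_2$ with $a^2 = -15$, and then $q := a + i + 2j + 3k \in \Lambda$ satisfies $\mathrm{Nm}(q) = -1$. In particular $q$ is a unit of $\Lambda$ (as $q^{-1} = \bar q / \mathrm{Nm}(q) \in \Lambda$), and left-multiplication $L_q \colon \Lambda \to \Lambda$ is a $\Z_2$-linear automorphism satisfying $\mathrm{Nm}(L_q(x)) = -\mathrm{Nm}(x)$, giving the required isometry.
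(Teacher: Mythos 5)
Your proof is correct, and it reaches the same key points as the paper while reorganizing the case analysis. For odd primes, the paper splits by \(p \bmod 4\): when \(p \equiv 1\ (4)\) it simply notes that \(-1\) is a square (so rescaling every coordinate by \(\sqrt{-1}\) does the job, whether or not \(p \mid N\)), while for \(p \equiv 3\ (4)\) it invokes the classification of unimodular \(\Z_p\)-forms by rank and discriminant. You instead split by whether \(p\) divides \(N\) and appeal uniformly to the Jordan-splitting classification, using that \(-1\) is a square at \(p_i\) (since \(p_i \equiv 1\ (4)\)) to match the discriminants of the scaled blocks; this is logically equivalent but packages the two paper cases into one statement. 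For \(p = 2\), both arguments reduce to \(\langle 1,1,1,1\rangle \cong_{\Z_2} \langle -1,-1,-1,-1\rangle\) using \(N \equiv 1\ (8)\), and both exhibit an explicit witness. The paper writes down a \(4\times 4\) matrix with entries involving \(\sqrt{-7}\in\Z_2\), presented without derivation; your quaternionic construction (left multiplication by a Lipschitz quaternion of reduced norm \(-1\), built from \(\sqrt{-15}\in\Z_2\)) is in fact the conceptual source of such matrices — indeed the paper's matrix is itself a quaternion multiplication matrix for the element \(2+i+j+\sqrt{-7}\,k\) of norm \(4+1+1-7=-1\). So the dyadic witnesses differ numerically but embody the identical idea; your write-up has the advantage of explaining where the matrix comes from.
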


\begin{proof}
  If \(p \equiv 1 \ (4)\), the assertion follows because \(-1\) is a square in \(\Z_p\).  If \(p \equiv 3 \ (4)\), then both forms have the same unit discriminant and thus are \(\Z_p\)-isometric by \cite{Cassels:rational-quadratic-forms}*{Corollary to Theorem~3.1, p.\,116}.  Finally, for the case \(p = 2\), recall that  a unit \(x \in \Z_2^\ast\) is a square if and only if \(x \equiv 1 \ (8)\) under the canonical reduction \(\Z_2 \rightarrow \Z / 8 \Z\).  Therefore it is enough to see that \(\langle 1, \ 1, \ 1, \ 1 \rangle\) is \(\Z_2\)-isometric to \(\langle -1, -1, -1, -1 \rangle\).  Since \(\sqrt{-7} \in \Z_2\), the matrix
\[
  \begin{pmatrix}
    2 & 1 & 1 & \sqrt{-7} \\
    -1 & 2 & -\sqrt{-7} & 1 \\
    -1 & \sqrt{-7} & 2 & -1 \\
    -\sqrt{-7} & -1 & 1 & 2
  \end{pmatrix}
\]
defines an explicit such isometry (cf.~\cite{Aka:profinite-completions}*{Lemma~2}).
\end{proof}

Recall that we denoted the two quadratic forms in Theorem~\ref{thm:main-theorem} by \(q^k_\pm\).  Lemmas~\ref{lemma:isomorphic-spin-groups} and~\ref{lemma:isometric-forms} thus combine as follows.

\begin{proposition} \label{prop:isomorphic-spin-groups}
  For each prime \(p\), we have \(\mathbf{Spin}(q^k_+)(\Z_p) \cong \mathbf{Spin}(q^k_-)(\Z_p)\).
\end{proposition}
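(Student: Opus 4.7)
The plan is to observe that this proposition is essentially a direct combination of Lemmas~\ref{lemma:isomorphic-spin-groups} and~\ref{lemma:isometric-forms} applied to $\calO = \Z_p$, once we verify the congruence hypothesis needed by the latter lemma. First I would check that the primes $p_1, \ldots, p_k$ fixed in Theorem~\ref{thm:main-theorem} satisfy $p_i \equiv 1 \pmod 8$. Since $24 \equiv 0 \pmod 8$ and $17 \equiv 1 \pmod 8$, any prime in the arithmetic progression $17 + 24\N$ is congruent to $1$ modulo $8$, so the hypothesis is met.

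Next I would apply Lemma~\ref{lemma:isometric-forms} with the product $p_1 \cdots p_k$, which, being a product of primes that are $1 \pmod 8$, is itself $\equiv 1 \pmod 8$. This yields a $\Z_p$-isometry
\[
  \langle 1,\ 1,\ 1,\ p_1 \cdots p_k \rangle \ \cong \ \langle -1,\, -1,\, -1,\, -p_1 \cdots p_k \rangle
\]
for every prime $p$. Taking the orthogonal sum of both sides with the one-dimensional form $\langle 3 \rangle$ (via the identity on that summand) extends this to a $\Z_p$-isometry of the five-variable forms $q^k_+ = \langle 1,\ 1,\ 1,\ p_1 \cdots p_k,\ 3 \rangle$ and $q^k_- = \langle -1,\,-1,\,-1,\,-p_1 \cdots p_k,\ 3 \rangle$. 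In particular, the quadratic $\Z_p$-modules $(L_+, q^k_+)$ and $(L_-, q^k_-)$ are isomorphic.

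Finally I would invoke Lemma~\ref{lemma:isomorphic-spin-groups} with $\calO = \Z_p$, which is a domain of characteristic zero, and $k = \Q_p$ as its field of fractions. The lemma converts the isometry of $\Z_p$-quadratic spaces into an isomorphism of the associated integral spinor groups, delivering
\[
  \bSpin(q^k_+)(\Z_p) \ \cong \ \bSpin(q^k_-)(\Z_p).
\]
There is no real obstacle here beyond bookkeeping: the hard work is packaged in the two lemmas, and the only minor point to emphasize is that the isometry of the four-variable forms preserves the orthogonal summand $\langle 3 \rangle$ and therefore extends diagonally to the full five-variable forms appearing in Theorem~\ref{thm:main-theorem}.
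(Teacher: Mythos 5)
Your proposal is correct and matches the paper's approach, which likewise combines Lemma~\ref{lemma:isometric-forms} (giving the $\Z_p$-isometry of the four-variable blocks, extended by the common $\langle 3\rangle$ summand) with the functoriality of Lemma~\ref{lemma:isomorphic-spin-groups}. You simply spell out the bookkeeping — the congruence $17+24\N \subset 1+8\Z$ and the orthogonal extension — that the paper leaves implicit.
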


Let us remark at this point that we do not have to distinguish between abstract and topolgical isomorphisms of profinite groups in this article: all occurring profinite groups are topologically finitely generated which implies that isomorphisms are automatically continuous by a deep result of Nikolov--Segal~\cite{Nikolov-Segal:profinite-groups}.  We are now in a position to prove the first part of Theorem~\ref{thm:main-theorem}.

\begin{proposition} \label{prop:isomorphism}
  For each \(k \ge 2\), we have an isomorphism \(\widehat{\Gamma^k_+} \cong \widehat{\Gamma^k_-}\).
\end{proposition}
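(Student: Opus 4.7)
The plan is to bootstrap from the congruence subgroup property established in Corollary~\ref{cor:spin-csp} to express each profinite completion as an infinite product of local spinor groups, and then to apply the local isomorphisms from Proposition~\ref{prop:isomorphic-spin-groups} factor by factor.

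First I would identify $\Gamma^k_\pm$ as the $S$-arithmetic group $\mathbf{Spin}(q^k_\pm)(\calO_{\Q,S})$ with $S = \{\infty, p_1, \ldots, p_k\}$, and verify that the hypothesis of strong approximation needed for the short exact sequence~\eqref{eq:ses} is satisfied; namely, $G_S = \prod_{v \in S} \mathbf{Spin}(q^k_\pm)(\Q_v)$ must be noncompact. This is immediate because $q^k_\pm$ has five variables and therefore is isotropic over $\Q_{p_1}$ by \cite{Lam:quadratic-forms}*{Theorem~2.12, p.\,158}, so already the factor at $p_1$ is a noncompact group.

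Next I invoke Corollary~\ref{cor:spin-csp}, which tells us that the congruence kernel $C(\Q, \mathbf{Spin}(q^k_\pm), S)$ is trivial. Plugging this into~\eqref{eq:ses} collapses it to canonical isomorphisms
\[ \widehat{\Gamma^k_\pm} \;\cong\; \prod_{p \notin \{p_1, \ldots, p_k\}} \mathbf{Spin}(q^k_\pm)(\Z_p). \]
Finally, Proposition~\ref{prop:isomorphic-spin-groups} provides for every prime $p$ an isomorphism $\mathbf{Spin}(q^k_+)(\Z_p) \cong \mathbf{Spin}(q^k_-)(\Z_p)$. Taking the product of these over the primes $p \notin \{p_1, \ldots, p_k\}$ yields the desired isomorphism $\widehat{\Gamma^k_+} \cong \widehat{\Gamma^k_-}$.

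There is no real obstacle beyond assembling the pieces; the heavy lifting is done in the preceding sections. The only technical point worth a remark is that one does not need to check continuity of the isomorphism: as the groups involved are topologically finitely generated, the Nikolov--Segal theorem referenced just before the statement guarantees that any abstract group isomorphism between them is automatically a homeomorphism.
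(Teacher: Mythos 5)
Your proof is correct and follows essentially the same route as the paper: use CSP (Corollary~\ref{cor:spin-csp}) to collapse the short exact sequence~\eqref{eq:ses} into an identification of $\widehat{\Gamma^k_\pm}$ with the product $\prod_{p \nmid p_1\cdots p_k}\mathbf{Spin}(q^k_\pm)(\Z_p)$, then match factors via Proposition~\ref{prop:isomorphic-spin-groups}. You are slightly more explicit than the paper in spelling out the noncompactness check for strong approximation and in recalling the Nikolov--Segal point (which the paper already addressed in the preceding remark), but the content is the same.
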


\begin{proof}
  According to Corollary~\ref{cor:spin-csp} and the short exact sequence in \eqref{eq:ses}, the profinite completions of \(\mathbf{Spin}(q^k_\pm)(\Z[\textstyle\frac{1}{p_1 \cdots p_k}])\) are given by the products
  \[ \prod_{p \,\nmid\, p_1 \cdots p_k} \mathbf{Spin}(q^k_\pm)(\Z_p) \]
  whose factors are isomorphic by Proposition~\ref{prop:isomorphic-spin-groups}.
\end{proof}

\section{Witt indices of quadratic forms and \(\ell^2\)-cohomology}
\label{section:quadratic-forms}

Let $(V,q)$ be a nondegenerate quadratic space over a field $k$. By the Witt decomposition theorem~\cite{witt} the quadratic space $(V,q)$ decomposes uniquely into an orthogonal sum 
\[ (V,q)\cong w(\mathbb{H}, q_\mathbb{H}) \perp (V_a, q_a)\] 
where $(V_a, q_a)$ is anisotropic and the left summand is a $w$-fold sum of hyperbolic planes $q_\mathbb{H}$, coming from the bilinear form $(x,y)\mapsto xy$.  The number $w$ is the \emph{Witt index} \(\ind_k (V,q)\).  Recall the setting of Theorem~\ref{thm:main-theorem}.

\begin{proposition}\label{prop: index one}
  For each \(i = 1, \ldots, k\), we have \(\ind_{\Q_{p_i}} q^k_\pm = 1\).
\end{proposition}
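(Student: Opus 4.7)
The plan is to reduce everything to a single Hilbert symbol computation over $\Q_{p_i}$ and then apply quadratic reciprocity. First, since $p_i \equiv 17 \equiv 1 \pmod{4}$, the element $-1$ is a square in $\Q_{p_i}$, so $q^k_+ \cong q^k_-$ over $\Q_{p_i}$ (alternatively, this is immediate from the $\Z_{p_i}$-isometry underlying Proposition~\ref{prop:isomorphic-spin-groups}). Thus it suffices to work with one form, say $q := \langle 1, 1, 1, p_1\cdots p_k, 3\rangle$, and to show $\ind_{\Q_{p_i}}(q) = 1$.

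Next, any nondegenerate $5$-dimensional quadratic form over a nonarchimedean local field is isotropic and its anisotropic kernel has dimension at most $4$, so the Witt index lies in $\{1, 2\}$; only the value $2$ needs to be excluded. Using $-1 \in (\Q_{p_i}^\times)^2$ once more to split $\langle 1, 1\rangle$ off as a hyperbolic plane, this reduces to showing that the ternary form $q' := \langle 1, p_1\cdots p_k, 3\rangle$ is anisotropic over $\Q_{p_i}$.

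The final step is the Hilbert symbol calculation. The standard criterion says $\langle 1, -a, -b\rangle$ is anisotropic over $\Q_{p_i}$ iff the Hilbert symbol $(a,b)_{p_i}$ is nontrivial, so the task becomes showing $(-p_1\cdots p_k,\, -3)_{p_i} = -1$. Expanding by bilinearity and using both that $-1$ is a square in $\Q_{p_i}$ and that the Hilbert pairing of two units is trivial in odd residue characteristic, the only surviving factor is $(p_i, -3)_{p_i} = \left(\tfrac{-3}{p_i}\right)$. Quadratic reciprocity together with the congruences $p_i \equiv 1 \pmod{4}$ and $p_i \equiv 2 \pmod{3}$ then forces
\[ \left(\tfrac{-3}{p_i}\right) = \left(\tfrac{3}{p_i}\right) = \left(\tfrac{p_i}{3}\right) = \left(\tfrac{2}{3}\right) = -1. \]

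The computation itself poses no real obstacle; the conceptual point is to recognize that the arithmetic progression $17 + 24\N$ is engineered precisely so that $p_i \equiv 1 \pmod{4}$ collapses all auxiliary Hilbert symbol factors, while $p_i \equiv 2 \pmod{3}$ drives the remaining Legendre symbol to $-1$.
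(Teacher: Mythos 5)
Your proof is correct and follows essentially the same route as the paper: split off one hyperbolic plane using $\sqrt{-1}\in\Q_{p_i}$, reduce to anisotropy of the ternary residual form $\langle 1, p_1\cdots p_k, 3\rangle$, and evaluate the obstruction via quadratic reciprocity using $p_i\equiv 1\ (4)$ and $p_i\equiv 2\ (3)$. The only cosmetic difference is that you invoke the criterion ``$\langle 1,-a,-b\rangle$ is anisotropic iff $(a,b)_{p_i}=-1$'' directly, whereas the paper compares the Hasse invariant with $(-1,-d)_{\Q_{p_i}}$ following Serre; these are equivalent reformulations and both collapse to the same Legendre symbol $\bigl(\tfrac{3}{p_i}\bigr)=-1$.
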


\begin{proof}
  By Lemma~\ref{lemma:isometric-forms} it is enough to consider the form \(q^k_+\).  Using that \(\sqrt{-1} \in \Q_{p_i}\), we have an orthogonal decomposition
  \[ q^k_+ = \mathbb{H} \oplus \langle 1, p_1 \cdots p_k, 3 \rangle \]
  and it remains to show that \(\langle 1, p_1 \cdots p_k, 3 \rangle\) is \(\Q_{p_i}\)-anisotropic.  According to~\cite{Serre:arithmetic}*{Theorem~6\,(ii), p.\,36}, this is granted precisely if the Hasse invariant \(\varepsilon\) of \(\langle 1, p_1 \cdots p_k, 3 \rangle\) differs from the Hilbert symbol \((-1,-3 p_1 \cdots p_k)_{\Q_{p_i}}\).  The Hilbert symbol equals \(1\) because \(-1\) is a square in \(\Q_{p_i}\).  The Hasse invariant is given by
  \[ \varepsilon = (1, p_1 \cdots p_k)_{\Q_{p_i}} (1, 3)_{\Q_{p_i}} (p_1 \cdots p_k, 3)_{\Q_{p_i}} = (p_1 \cdots p_k, 3)_{\Q_{p_i}} = (p_i, 3)_{\Q_{p_i}} \]
  and according to \cite{Serre:arithmetic}*{Theorem~1, p.\,20}, the latter is equal to the Legendre symbol \(\legendre{3}{p_i}\) which we can evaluate as
  \[ \legendre{3}{p_i} = \legendre{p_i}{3} = \legendre{2}{3} = -1 \]
  by quadratic reciprocity because \(p_i \equiv 1 \ (4)\) and because \( p_i \equiv 2 \ (3)\).  
\end{proof}

\begin{lemma} \label{lemma:rank-equals-witt-index}
  The $k$-rank of $\bSpin(q)$ equals the Witt index \(\ind_k (V,q)\).
\end{lemma}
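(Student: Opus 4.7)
The plan is to reduce to the special orthogonal group and analyze its natural representation on $V$. Since the morphism $\pi \colon \bSpin(q) \to \bSO(q)$ from~\eqref{eq:covering-map} is a central $k$-isogeny of connected reductive $k$-groups, the $k$-ranks of the two groups agree. Thus it suffices to prove $\rank_k \bSO(q) = w$, where $w \coloneqq \ind_k(V,q)$.

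For the lower bound, I would exploit Witt's decomposition $(V,q) \cong w \cdot (\mathbb{H}, q_{\mathbb{H}}) \perp (V_a, q_a)$ to choose a hyperbolic basis $e_1, f_1, \ldots, e_w, f_w$ of the hyperbolic part, with $q(e_i) = q(f_i) = 0$ and $B_q(e_i, f_j) = \delta_{ij}$. Letting a torus point $(t_1, \ldots, t_w) \in \mathbf{G}_m^w$ act as $t_i$ on $e_i$, as $t_i^{-1}$ on $f_i$, and trivially on $V_a$ then defines a $k$-embedding $\mathbf{G}_m^w \hookrightarrow \bSO(q)$, exhibiting a $k$-split torus of dimension $w$.

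For the upper bound, let $T \subseteq \bSO(q)$ be a maximal $k$-split torus of dimension $r$. The natural embedding $T \hookrightarrow \bSO(q) \subseteq \GL(V)$ is a faithful $k$-rational representation, and since $T$ is $k$-split the space $V$ decomposes over $k$ as $V = \bigoplus_{\chi \in X^*(T)} V_\chi$ into weight spaces. Because $T$ preserves the symmetric bilinear form $B_q$, one has $B_q(V_\chi, V_{\chi'}) = 0$ whenever $\chi + \chi' \neq 0$; in particular every $V_\chi$ with $\chi \neq 0$ is totally isotropic, and $B_q$ pairs $V_\chi$ perfectly with $V_{-\chi}$. Selecting one representative from each unordered pair $\{\chi, -\chi\}$ of nonzero weights that occurs, one obtains a totally isotropic $k$-subspace $\bigoplus V_\chi$, yielding $w \geq \sum \dim V_\chi$.

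The hard part will be verifying that at least $r$ distinct unordered pairs $\{\chi, -\chi\}$ of nonzero weights appear in the decomposition, which then gives $w \geq r$. This follows from faithfulness of the action of $T$ on $V$: the kernel $\bigcap_{\chi \neq 0} \ker \chi$ is trivial, which forces the nonzero weights to span $X^*(T) \otimes_{\mathbb{Z}} \mathbb{Q}$; since each pair $\{\chi, -\chi\}$ spans a single line, at least $r$ distinct pairs are needed to span this $r$-dimensional $\mathbb{Q}$-vector space. Combined with the lower bound this proves $\rank_k \bSO(q) = w$, and hence the lemma.
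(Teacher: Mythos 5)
Your proposal is correct. You follow the same overall strategy as the paper: reduce from \(\bSpin(q)\) to \(\bSO(q)\) via the central isogeny \(\pi\), then identify the maximal \(k\)-split torus in \(\bSO(q)\) with \(\mathbf{G}_m^w\) acting on the hyperbolic part. The difference is one of detail rather than method: the paper exhibits the torus \(\mathbf{G}_m^w \times \{\id_{V_a}\} < \bSO(q_\mathbb{H})^w \times \bSO(q_a)\) and simply cites Milne (Example 25.5) for the fact that this torus is maximal split. You instead prove maximality from scratch: decomposing \(V\) into weight spaces under a maximal \(k\)-split torus \(T\), observing that \(T\)-invariance of \(B_q\) forces \(B_q(V_\chi, V_{\chi'})=0\) when \(\chi+\chi'\neq 0\), assembling a totally isotropic subspace from one weight space per nonzero pair \(\{\chi,-\chi\}\), and using faithfulness of \(T \hookrightarrow \GL(V)\) to show that the nonzero weights span \(X^*(T)\otimes\Q\), so at least \(\dim T\) such pairs occur. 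This makes your argument self-contained where the paper defers to a reference, which is a reasonable trade-off. (Note, incidentally, that the paper's phrase ``hence \(m\)-dimensional'' is a typo for ``hence \(w\)-dimensional''.)
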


\begin{proof}
The $k$-rank of $\bSpin(q)$ is the $k$-rank of $\bSO(q)$ since the homomorphism~$\pi$ from~\eqref{eq:covering-map} is a central isogeny~\cite{Margulis:discrete-subgroups}*{Corollary~1.4.6 on p.~42}. The maximal split torus $S$ of the latter is 
\[ S=\mathbb{G}_m\times\dots \times\mathbb{G}_m\times \{\id_{V_a}\}< \bSO(q_\mathbb{H})^w\times \bSO(q_a)<\bSO(q)\]
and hence $m$-dimensional~\cite{milne}*{Example~25.5, p.\,545}. 
\end{proof}

\begin{corollary} \label{cor:local-rank-one}
  For each \(i=1, \ldots, k\), the \(\Q_{p_i}\!\)-rank of \(\mathbf{Spin}(q^k_\pm)\) is one.
\end{corollary}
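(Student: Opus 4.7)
The corollary is a direct consequence of the two preceding results in this section, so my plan is essentially to chain them together. First I would invoke Lemma~\ref{lemma:rank-equals-witt-index}, applied with $k = \Q_{p_i}$ and $q = q^k_\pm$ viewed as a quadratic form over $\Q_{p_i}$, to reduce the computation of the $\Q_{p_i}$-rank of $\mathbf{Spin}(q^k_\pm)$ to the computation of the Witt index $\ind_{\Q_{p_i}} q^k_\pm$.

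Second, I would quote Proposition~\ref{prop: index one}, which asserts that this Witt index equals $1$ for each $i = 1, \ldots, k$, and for both sign choices. Combining the two statements immediately yields the claim.

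There is essentially no obstacle here: the content of the corollary is already contained in the two cited results, and the role of the corollary is just to package the conclusion in the form that will be used in the $\ell^2$-cohomology computation of the next part of the paper. The only thing worth noting is that one should be slightly careful to apply Lemma~\ref{lemma:rank-equals-witt-index} over the local field $\Q_{p_i}$ rather than over $\Q$; but the statement and proof of that lemma are formulated for an arbitrary field $k$ of characteristic zero, so this specialization is immediate.
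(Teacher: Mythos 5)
Your proof is correct and matches the paper's (implicit) argument exactly: the corollary follows by specializing Lemma~\ref{lemma:rank-equals-witt-index} to the field $\Q_{p_i}$ and then invoking Proposition~\ref{prop: index one}. Nothing more is needed.
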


Finally, we have collected all preliminaries to give the proof of Theorem~\ref{thm:main-theorem}.

\begin{proof}[Proof of Theorem~\ref{thm:main-theorem}.]
  In view of Proposition~\ref{prop:isomorphism}, it remains to find the positive \(\ell^2\)-Betti numbers of \(\Gamma^k_\pm\).  To this end, we note that the \(S\)-arithmetic groups \(\Gamma^k_\pm\) are lattices in the locally compact groups~\cite{Margulis:discrete-subgroups}*{Theorem 3.2.4 on p.~63}
  \[ G^k_{S, \pm} = \prod_{v \in S} \mathbf{Spin}(q^k_\pm)(\Q_v) \]
  where \(S = \{\infty, p_1, \ldots, p_k\}\) and \(\Q_\infty = \R\).  For a lattice \(\Gamma \le G\) in a locally compact (second countable) group \(G\) with a fixed Haar measure \(\mu\), a theorem of Kyed--Petersen--Vaes~\cite{Kyed-Petersen-Vaes:l2-betti-locally-compact} shows that
  \[ b^{(2)}_r(\Gamma) = b^{(2)}_r(G, \mu) \cdot \mu(G / \Gamma), \]
  where \(\mu(G / \Gamma)\) is the induced \(G\)-invariant measure and \(b^{(2)}_r(G, \mu)\) is the \emph{\(r\)-th \(\ell^2\)-Betti number of the locally compact group} \(G\) defined by Petersen~\cite{Petersen:phd-thesis}.  Suppressing \(\mu\) from the notation, we thus only have to find out for which~\(r\) we have \(b^{(2)}_r(G^k_{S, \pm}) > 0\).  Petersen's K\"unneth formula~[ibid., Theorems~6.5 and~6.7] shows that \(b^{(2)}_r(G^k_{S, \pm})\) is given by
  \[ \sum_{r = s_0 + \cdots + s_k} b^{(2)}_{s_0} (\mathbf{Spin}(q^k_\pm)(\R))\; b^{(2)}_{s_1} (\mathbf{Spin}(q^k_\pm)(\Q_{p_1}))\; \cdots \; b^{(2)}_{s_k} (\mathbf{Spin}(q^k_\pm)(\Q_{p_k})). \]
  We note that the group \(\mathbf{Spin}(q^k_-)(\R)\) is a two-fold cover of the oriented isometry group of hyperbolic \(4\)-space.  Therefore \(b^{(2)}_s(\mathbf{Spin}(q^k_-)(\R))\) is positive if and only if \(s = 2\) by a result of Dodziuk~\cite{Dodziuk:rotationally}.  The group \(\mathbf{Spin}(q^k_+)(\R)\) is compact so that \(b^{(2)}_s(\mathbf{Spin}(q^k_+)(\R))\) is positive if and only if \(s = 0\) by~\cite{Petersen:phd-thesis}*{Propositions~3.4 and~3.6}.  For \(i = 1, \ldots, k\), we have that \(b^{(2)}_s(\mathbf{Spin}(q^k_\pm)(\Q_{p_i}))\) is positive if and only if \(s = \rank_{\Q_{p_i}} \mathbf{Spin}(q^k_\pm)\).  This follows from the calculations of Dymara--Januszkiewicz~\cite{Dymara-Januszkiewicz:cohomology-of-buildings} as was observed by Petersen--Valette~\cite{Petersen-Valette:l2-betti-plancherel}*{Corollary~13}.  For technical reasons, it is assumed in these considerations that the residue field \(\mathbb{F}_{p_i}\) of \(\Q_{p_i}\) has large enough characteristic.  We ensured this by requiring \(p_i \ge 89\) though it seems likely that this condition is not needed.

  The discussion reveals that the above sum has at most one nonvanishing summand and Corollary~\ref{cor:local-rank-one} implies that such a summand occurs for \(q^k_+\) if and only if \(r = k\), whereas it occurs for \(q^k_-\) if and only if \(r = k + 2\).
\end{proof}

\begin{bibdiv}[References]
  \begin{biblist}
    
\bib{Aka:profinite-completions}{article}{
   author={Aka, M.},
   title={Profinite completions and Kazhdan's property (T)},
   journal={Groups Geom. Dyn.},
   volume={6},
   date={2012},
   number={2},
   pages={221--229},
}


\bib{Bridson-Conder-Reid:determining}{article}{
   author={Bridson, M. R.},
   author={Conder, M. D. E.},
   author={Reid, A. W.},
   title={Determining Fuchsian groups by their finite quotients},
   journal={Israel J. Math.},
   volume={214},
   date={2016},
   number={1},
   pages={1--41},
}

\bib{Cassels:rational-quadratic-forms}{book}{
   author={Cassels, J. W. S.},
   title={Rational quadratic forms},
   series={London Mathematical Society Monographs},
   volume={13},
   publisher={Academic Press, Inc. [Harcourt Brace Jovanovich, Publishers],
   London-New York},
   date={1978},
   pages={xvi+413},
}

\bib{Dodziuk:rotationally}{article}{
   author={Dodziuk, J.},
   title={$L^{2}$\ harmonic forms on rotationally symmetric Riemannian
   manifolds},
   journal={Proc. Amer. Math. Soc.},
   volume={77},
   date={1979},
   number={3},
   pages={395--400},
}

\bib{Dieudonne:geometrie}{book}{
   author={Dieudonn\'e, J.},
   title={La g\'eom\'etrie des groupes classiques},
   language={French},
   series={Ergebnisse der Mathematik und ihrer Grenzgebiete (N.F.), Heft 5},
   publisher={Springer-Verlag, Berlin-G\"ottingen-Heidelberg},
   date={1955},
   pages={vii+115},
}

\bib{Dymara-Januszkiewicz:cohomology-of-buildings}{article}{
   author={Dymara, J.},
   author={Januszkiewicz, T.},
   title={Cohomology of buildings and their automorphism groups},
   journal={Invent. Math.},
   volume={150},
   date={2002},
   number={3},
   pages={579--627},
}

\bib{Eichler:quadratische-formen}{book}{
   author={Eichler, M.},
   title={Quadratische Formen und orthogonale Gruppen},
   language={German},
   series={Die Grundlehren der mathematischen Wissenschaften in
   Einzeldarstellungen mit besonderer Ber\"ucksichtigung der
   Anwendungsgebiete. Band LXIII},
   publisher={Springer-Verlag, Berlin-G\"ottingen-Heidelberg},
   date={1952},
   pages={xii+220},
}

%
\bib{Kammeyer:introduction-to-l2}{book}{
     author={Kammeyer, H.},
     title={Introduction to \(\ell^2\)-invariants},
     series={lecture notes},
     note={Available for download at \url{http://topology.math.kit.edu/21_679.php}},
     date={2017/2018},
}

\bib{Kammeyer:profinite-commensurability}{article}{
  author={Kammeyer, H.},
  title={Profinite commensurability of S-arithmetic groups},
  note={e-print \arXiv{1802.08559}},
  year={2018},
}

\bib{Kneser:normalteiler}{article}{
   author={Kneser, M.},
   title={Normalteiler ganzzahliger Spingruppen},
   language={German},
   journal={J. Reine Angew. Math.},
   volume={311/312},
   date={1979},
   pages={191--214},
}

\bib{Kneser:orthogonale-gruppen}{article}{
   author={Kneser, M.},
   title={Orthogonale Gruppen \"uber algebraischen Zahlk\"orpern},
   language={German},
   journal={J. Reine Angew. Math.},
   volume={196},
   date={1956},
   pages={213--220},
}

\bib{Kyed-Petersen-Vaes:l2-betti-locally-compact}{article}{
   author={Kyed, D.},
   author={Petersen, H.\,D.},
   author={Vaes, S.},
   title={$L^2$-Betti numbers of locally compact groups and their cross
   section equivalence relations},
   journal={Trans. Amer. Math. Soc.},
   volume={367},
   date={2015},
   number={7},
   pages={4917--4956},
}

\bib{Lam:quadratic-forms}{book}{
   author={Lam, T. Y.},
   title={Introduction to quadratic forms over fields},
   series={Graduate Studies in Mathematics},
   volume={67},
   publisher={American Mathematical Society, Providence, RI},
   date={2005},
   pages={xxii+550},
}

\bib{lawson+michelson}{book}{
   author={Lawson, H. Blaine, Jr.},
   author={Michelsohn, Marie-Louise},
   title={Spin geometry},
   series={Princeton Mathematical Series},
   volume={38},
   publisher={Princeton University Press, Princeton, NJ},
   date={1989},
   pages={xii+427},
}

%
\bib{Lueck:approximating}{article}{
   author={L\"uck, W.},
   title={Approximating $L^2$-invariants by their finite-dimensional
   analogues},
   journal={Geom. Funct. Anal.},
   volume={4},
   date={1994},
   number={4},
   pages={455--481},
}

\bib{Lueck:l2-invariants}{book}{
   author={L\"uck, W.},
   title={$L^2$-invariants: theory and applications to geometry and
   $K$-theory},
   series={Ergebnisse der Mathematik und ihrer Grenzgebiete. 3. Folge. A
   Series of Modern Surveys in Mathematics},
   volume={44},
   publisher={Springer-Verlag, Berlin},
   date={2002},
   pages={xvi+595},
}

\bib{Margulis:discrete-subgroups}{book}{
   author={Margulis, G. A.},
   title={Discrete subgroups of semisimple Lie groups},
   series={Ergebnisse der Mathematik und ihrer Grenzgebiete (3) [Results in
   Mathematics and Related Areas (3)]},
   volume={17},
   publisher={Springer-Verlag, Berlin},
   date={1991},
   pages={x+388},
}
\bib{milne}{book}{
   author={Milne, J. S.},
   title={Algebraic groups},
   series={Cambridge Studies in Advanced Mathematics},
   volume={170},
   note={The theory of group schemes of finite type over a field},
   publisher={Cambridge University Press, Cambridge},
   date={2017},
   pages={xvi+644},
}

\bib{Nica:malcev-selberg}{article}{
  author={Nica, B.},
  title={Linear groups - Malcev's theorem and Selberg's lemma},
  note={e-print \arXiv{1306.2385}},
  year={2013},
}

\bib{Nikolov-Segal:profinite-groups}{article}{
   author={Nikolov, N.},
   author={Segal, D.},
   title={On finitely generated profinite groups. I. Strong completeness and
   uniform bounds},
   journal={Ann. of Math. (2)},
   volume={165},
   date={2007},
   number={1},
   pages={171--238},
}

\bib{Petersen:phd-thesis}{book}{
  author={Petersen, H.\,D.},
  title={L2-Betti numbers of locally compact groups},
  year={2012},
  isbn={978-87-7078-993-6},
  publisher={PhD thesis, Department of Mathematical Sciences, Faculty of Science, University of Copenhagen},
  note={\url{http://www.math.ku.dk/noter/filer/phd13hdp.pdf}},
}

\bib{Petersen-Valette:l2-betti-plancherel}{article}{
   author={Petersen, H.\,D.},
   author={Valette, A.},
   title={$L^2$-Betti numbers and Plancherel measure},
   journal={J. Funct. Anal.},
   volume={266},
   date={2014},
   number={5},
   pages={3156--3169},
}

\bib{Platonov-Rapinchuk:algebraic-groups}{book}{
   author={Platonov, V.},
   author={Rapinchuk, A.},
   title={Algebraic groups and number theory},
   series={Pure and Applied Mathematics},
   volume={139},
   note={Translated from the 1991 Russian original by Rachel Rowen},
   publisher={Academic Press, Inc., Boston, MA},
   date={1994},
   pages={xii+614},
}

\bib{Prasad-Raghunathan:metaplectic-kernel}{article}{
   author={Prasad, G.},
   author={Raghunathan, M. S.},
   title={On the congruence subgroup problem: determination of the
   ``metaplectic kernel''},
   journal={Invent. Math.},
   volume={71},
   date={1983},
   number={1},
   pages={21--42},
}

\bib{Prasad-Rapinchuk:computation}{article}{
   author={Prasad, G.},
   author={Rapinchuk, A. S.},
   title={Computation of the metaplectic kernel},
   journal={Inst. Hautes \'Etudes Sci. Publ. Math.},
   number={84},
   date={1996},
   pages={91--187 (1997)},
}


\bib{Raghunathan:on-csp}{article}{
   author={Raghunathan, M. S.},
   title={On the congruence subgroup problem},
   journal={Inst. Hautes \'Etudes Sci. Publ. Math.},
   number={46},
   date={1976},
   pages={107--161},
}

\bib{Reid:profinite-properties}{article}{
   author={Reid, A.\,W.},
   title={Profinite properties of discrete groups},
   conference={
      title={Groups St Andrews 2013},
   },
   book={
      series={London Math. Soc. Lecture Note Ser.},
      volume={422},
      publisher={Cambridge Univ. Press, Cambridge},
   },
   date={2015},
   pages={73--104},
}
		
\bib{Ribes-Zalesskii:profinite-groups}{book}{
   author={Ribes, L.},
   author={Zalesskii, P.},
   title={Profinite groups},
   series={Ergebnisse der Mathematik und ihrer Grenzgebiete. 3. Folge.},
   volume={40},
   publisher={Springer-Verlag, Berlin},
   date={2000},
   pages={xiv+435},
}
\bib{sauer-survey}{article}{
   author={Sauer, R.},
   title={$\ell^2$-Betti numbers of discrete and non-discrete groups},
   book={
          title={New directions in locally compact groups},
          series={London Mathematical Society Lecture Note Series},
          volume={447},
          publisher={Cambridge University Press},
          date={2018}
         },
   pages={205--226},
}

\bib{Serre:arithmetic}{book}{
   author={Serre, J.-P.},
   title={A course in arithmetic},
   note={Translated from the French;
   Graduate Texts in Mathematics, No. 7},
   publisher={Springer-Verlag, New York-Heidelberg},
   date={1973},
   pages={viii+115},
}

\bib{Serre:local-fields}{book}{
   author={Serre, J.-P.},
   title={Local fields},
   series={Graduate Texts in Mathematics},
   volume={67},
   note={Translated from the French by Marvin Jay Greenberg},
   publisher={Springer-Verlag, New York-Berlin},
   date={1979},
   pages={viii+241},
}

\bib{Stucki:master-thesis}{book}{
     author={Stucki, N.},
     title={\(L^2\)-Betti numbers and profinite completions of groups},
     publisher={Master thesis (Karlsruhe Institute of Technology)},
     note={Will be available for download at \url{http://topology.math.kit.edu/21_696.php}},
     date={2018},
}

\bib{Vaserstein:structure}{article}{
   author={Vaser\v ste\u\i n, L. N.},
   title={Structure of the classical arithmetic groups of rank greater than
   $1$},
   language={Russian},
   journal={Mat. Sb. (N.S.)},
   volume={91(133)},
   date={1973},
   pages={445--470, 472},
}

\bib{witt}{article}{
   author={Witt, E.},
   title={Theorie der quadratischen Formen in beliebigen K\"orpern},
   language={German},
   journal={J. Reine Angew. Math.},
   volume={176},
   date={1937},
   pages={31--44},
}

\end{biblist}
\end{bibdiv}

\end{document}